\newtheorem{theorem}{Theorem}[section]
\newtheorem{corollary}[theorem]{Corollary}
\newtheorem{lemma}[theorem]{Lemma}
\theoremstyle{definition}
\newtheorem{definition}[theorem]{Definition}
\newcommand{\vb}[1]{\mathbf{#1}}
\title{Properties of regular Tangles}
\author{Rebecca M. Bowen}
\address{School of Mathematical and Statistical Sciences, Clemson University, Clemson, South Carolina 29631}
\email{rmaebowen96@gmail.com}
\author{Sadie Pruitt}
\address{Stephens County High School, Toccoa, Georgia 30577}
\email{sadie.pruitt@stephenscountyschools.org}
\author{Douglas A. Torrance}
\address{Department of Mathematical Sciences, Piedmont University, Demorest, Georgia 30535}
\email{dtorrance@piedmont.edu}
\begin{document}

\maketitle

\begin{abstract}
  A Tangle is a smooth simple closed curve formed from arcs (or ``links'') of circles with fixed radius.  Most previous study of Tangles has dealt with the case where these arcs are quarter-circles, but Tangles comprised of thirds and sixths of circles are introduced.  Together, these three families of Tangles are related to the three regular tilings of the plane by squares, regular hexagons, and equilateral triangles.  This relationship is harnessed to prove results about the number of links comprising a Tangle and the area that it encloses.
\end{abstract}

\section{Introduction}

A \textit{Tangle} is a smooth simple closed curve that is also a
spline constructed from arcs of circles with fixed radius known as \textit{links}.
In the existing literature
\cite{chan,taylor,FH,fleron,torrance2019enumeration},
these links have been quarter circles, but
we generalize the definition to include Tangles constructed using links
of arbitrary central angle.
The reader may be curious about why a capital ``T'' is used when naming these curves.  It comes from the \textit{Tangle toy} developed by Richard Zawitz
\cite{zawitz1985annular,zawitz2015tangle}.

We will focus entirely on \textit{planar} Tangles, i.e., Tangles that
lie in the Euclidean plane.  Every Tangle necessarily consists of links from circles in some packing of the plane.  Adjacent links of a Tangle either belong to the same circle or distinct kissing circles from this packing.  Such a packing in turn determines a tiling $\mathscr T$ of the plane by equilateral polygons for which all interior and exterior angles are at least $60^\circ$.  Indeed, draw a vertex at the center of every circle in the packing and connect with an edge those pairs of vertices that lie in kissing circles.  An angle less than $60^\circ$ would result in overlapping circles.  Going in reverse, every such tiling $\mathscr T$ determines a family of Tangles, which we call $\mathscr T$-Tangles.  In particular, when $\mathscr T$ is the tiling of the plane by squares, the $\mathscr T$-Tangles are exactly the well-known case of Tangles made up of quarter-circles.  From this point forward, we will refer to this family as \textit{square Tangles}.

There are two other regular tilings of the plane, known since
antiquity, by regular hexagons and equilateral triangles \cite[2.1.1]{GS}.
We will call the corresponding Tangles \textit{hexagonal Tangles} and
\textit{triangular Tangles}, respectively.
Together, we will call these three varieties of Tangles derived from
the regular tilings \textit{regular Tangles}.

The \textit{radius} of a Tangle is the radius of the circles from the
corresponding packing.
The number of links in a Tangle is its \textit{length}.
For regular Tangles, these links are quarters, thirds, and sixths of circles for square, hexagonal, and triangular Tangles, respectively.  Links are either convex or concave, and a sequence of $n$ adjacent convex links, together with one concave link on each side, is an $n$-\textit{bulb}, a term that was introduced in \cite{FH}.  An \textit{inverted $n$-bulb} is a sequence of $n$ adjacent concave links, together with one convex link on each side.

This paper is organized as follows.  In \cref{dual polyforms}, we introduce the \textit{dual polyform} of a Tangle, a combinatorial object that will be useful for proving various results.  We also prove a version of the Gauss-Bonnet theorem for Tangles.  In \cref{square tangles}, we obtain new proofs of two well-known results regarding the length and area enclosed by a square Tangle.  In Sections \ref{hexagonal tangles} and \ref{triangular tangles}, we prove analogous statements for hexagonal and triangular Tangles, respectively.

This paper is based in part on the senior capstone research projects
of the first two authors, under the supervision of the third author,
while undergraduate mathematics majors at Piedmont College (now Piedmont University) \cite{bowen,pruitt}.

\section{Dual Polyforms}\label{dual polyforms}

Note that as we trace the circles in a circle packing to create a Tangle,
we alternate between circles that lie on the interior of Tangle and circles
that lie on the exterior.
Each of these circles corresponds to a vertex in the corresponding tiling.
We assign a color to each vertex to indicate whether it corresponds to
an interior or exterior circle, say black for interior and white for
exterior.
Vertices in the square and hexagonal tilings of the plane are 2-colorable, so the boundary coloring will agree with the usual notion of vertex coloring, where adjacent vertices have distinct colors.  However, vertices in the triangular tiling are not 2-colorable, and so we may have adjacent vertices with the same color, provided that the triangular Tangle has no adjacent links from the corresponding circles.

These colored vertices are also vertices of the boundary of a
\textit{polyform}, i.e., a subset of polygons belonging to the tiling.
We call this polyform, together with the vertex coloring of its boundary,
the \textit{dual polyform} of the Tangle.
By construction, this polyform is \textit{simply connected}, i.e., it
contains no holes.
The \textit{size} of a Tangle is the number of polygons comprising its
dual polyform.

Depending on the underlying tiling, polyforms are known by various names.
A square Tangle has a dual \textit{pseudo-polyomino} \cite{golomb} (or
\textit{polyplet} \cite{malkis}), a hexagonal Tangle has a dual
\textit{polyhex} \cite{HR}, and a triangular Tangle has a dual
\textit{pseudo-polyiamond} (or \textit{polyming} \cite{sicherman} or \textit{polyglass} \cite[A319324]{oeis}).
Note the \textit{pseudo-} prefixes in the first and third cases.
In contrast to their more well-known cousins, polyominoes and
polyiamonds, which only allow edge-to-edge connections between their
constituent polygons, the pseudo- variations also allow
vertex-to-vertex connections.
This is not an issue for polyhexes, as any two hexagons in the
hexagonal tiling that share a vertex must also share an edge.

In the square and triangular cases, the vertices where these
vertex-to-vertex connections occur are \textit{cut vertices} in the
underlying graph, i.e., their removal would create multiple connected
components.

\begin{lemma}\label{no white cut vertices}
  The dual polyform of a Tangle has no white cut vertices.
\end{lemma}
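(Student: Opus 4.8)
The plan is to argue by contradiction. Suppose the dual polyform $P$ of a Tangle $T$ has a white cut vertex $v$, and let $C_v$ be the circle of the packing centered at $v$. Since $v$ is white, $C_v$ is an exterior circle, so the disk it bounds lies outside $T$ and every link of $T$ lying on $C_v$ is concave. Since $v$ is a cut vertex of the underlying graph, the polygons of $P$ incident to $v$ split into $k\ge 2$ angular clusters around $v$, any two of which meet only at $v$, separated by ``gap'' sectors — angular regions at $v$ not covered by $P$. The goal is to show that concavity forces $T$ to break into at least two disjoint closed curves, contradicting the fact that a Tangle is a single simple closed curve.

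First I would set up a local dictionary near $v$. As a cut vertex, $v$ is a vertex of $\partial P$ and hence one of the colored vertices, so $T$ traverses at least one link on $C_v$; the endpoints of each such link are the tangency points of $C_v$ with neighboring circles, so they lie along edges of the tiling emanating from $v$, and the link subtends the angle at $v$ between two consecutive such edges. Tracking the orientation of $T$, a link on $C_v$ whose subtended angle lies inside one of the $P$-clusters at $v$ must be concave — its bounded disk, and in particular $v$, lies on the exterior side of $T$ — whereas a link whose subtended angle lies inside a gap sector must be convex. (These are the only possibilities, consistent with the fact that a fixed circle contributes either all convex or all concave links.) Because $v$ is white, every link of $T$ on $C_v$ is concave, hence lies inside a $P$-cluster; no link of $T$ crosses a gap sector at $v$.

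Now the contradiction: follow $T$ starting from an arc of $C_v$ inside one $P$-cluster $S_1$. Since $T$ never uses an arc of $C_v$ through a gap sector, $T$ can only return to the vicinity of $v$ through arcs lying in $S_1$, so the closed curve traced out stays among the polygons of the $S_1$-cluster and closes up there; it cannot reach a polygon of another cluster $S_2$, because any path in $P$ between the two clusters must pass through the cut vertex $v$, and $T$ declines to ``round'' that passage. Thus $T$ contains a closed-curve component using only $C_v$-arcs in $S_1$ and a separate one using only $C_v$-arcs in $S_2$, so $T$ is disconnected — impossible. The main obstacle is the middle step, i.e.\ rigorously establishing that an arc of $T$ on $C_v$ lies in a $P$-cluster exactly when it is concave; this is a careful local analysis of the rounding that produces $T$ from $\partial P$, combined with the alternation of interior and exterior circles along $T$ to handle the several clusters and gaps that can occur (especially in the triangular case). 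Once this local dictionary is in place the global connectedness argument is immediate, and it also clarifies why black cut vertices are allowed: there the rounded corners are precisely the gap corners, and the resulting convex links bridge the clusters together into a single curve.
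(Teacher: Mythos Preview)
Your argument is correct and follows the same route as the paper: a white cut vertex corresponds to an exterior circle, and because the Tangle cannot ``round'' through the gap sectors at such a vertex, it must decompose into several closed curves, one for each cluster. The paper's own proof is a single sentence together with a figure illustrating exactly this phenomenon, so your write-up is a considerably more detailed unpacking of the same idea rather than a different approach.
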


\begin{proof}
  A white cut vertex would correspond to a circle lying on the
  exterior of multiple Tangles.
  See, e.g., \cref{white cut vertex example}
\end{proof}

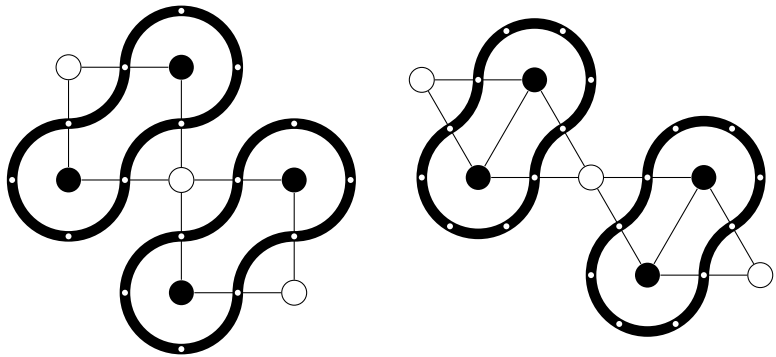
\begin{figure}
  \begin{tikzpicture}[scale=0.75]
    \node[shape=circle, fill] (a) at (-1,-1) {};
    \node[shape=circle, draw] (b) at (1,-1) {};
    \node[shape=circle, fill] (c) at (1,1) {};
    \node[shape=circle, draw] (d) at (-1,1) {};
    \node[shape=circle, fill] (e) at (-1,3) {};
    \node[shape=circle, fill] (f) at (-3,1) {};
    \node[shape=circle, draw] (g) at (-3,3) {};
    \draw (d) -- (a) -- (b) -- (c) -- (d) -- (e) -- (g) -- (f) -- (d);
    \draw[line width=4pt] (-1,0) arc (-90:0:1) arc (180:0:1) arc (0:-90:1)
      arc (90:180:1) arc (360:90:1);
    \draw[line width=4pt] (-3,2) arc (-90:0:1) arc (180:0:1) arc (0:-90:1)
      arc (90:180:1) arc (360:90:1);
    \fill[white] (-4,1) circle (1.5pt);
    \fill[white] (-3,0) circle (1.5pt);
    \fill[white] (-3,2) circle (1.5pt);
    \fill[white] (-2,-1) circle (1.5pt);
    \fill[white] (-2,1) circle (1.5pt);
    \fill[white] (-2,3) circle (1.5pt);
    \fill[white] (-1,-2) circle (1.5pt);
    \fill[white] (-1,0) circle (1.5pt);
    \fill[white] (-1,2) circle (1.5pt);
    \fill[white] (-1,4) circle (1.5pt);
    \fill[white] (0,-1) circle (1.5pt);
    \fill[white] (0,1) circle (1.5pt);
    \fill[white] (0,3) circle (1.5pt);
    \fill[white] (1,0) circle (1.5pt);
    \fill[white] (1,2) circle (1.5pt);
    \fill[white] (2,1) circle (1.5pt);
  \end{tikzpicture}\quad\quad
  \begin{tikzpicture}[scale=0.75]
    \node[shape=circle, fill] (a) at (0, 0) {};
    \node[shape=circle, draw] (b) at (2, 0) {};
    \node[shape=circle, fill] (c) at (4, 0) {};
    \node[shape=circle, draw] (d) at ({-2*cos(60)}, {2*sin(60)}) {};
    \node[shape=circle, fill] (e) at ({2*cos(60)}, {2*sin(60)}) {};
    \node[shape=circle, fill] (f) at ({2+2*cos(60)}, {-2*sin(60)}) {};
    \node[shape=circle, draw] (g) at ({4+2*cos(60)}, {-2*sin(60)}) {};
    \draw[line width=4pt] (1, 0) arc (360:120:1) arc (-60:0:1) arc (180:-60:1) arc (120:180:1);
    \draw[line width=4pt] (4, {-2*sin(60)}) arc (360:120:1) arc (-60:0:1) arc (180:-60:1) arc (120:180:1);
    \draw (a) -- (b) -- (c) -- (g) -- (f) -- (b) -- (e) -- (d) -- (a);
    \draw (a) -- (e);
    \draw (f) -- (c);
    \fill[white] (-1, 0) circle (1.5pt);
    \fill[white] (1, 0) circle (1.5pt);
    \fill[white] (3, 0) circle (1.5pt);
    \fill[white] (5, 0) circle (1.5pt);
    \fill[white] ({cos(120)}, {sin(120)}) circle (1.5pt);
    \fill[white] ({2+cos(120)}, {sin(120)}) circle (1.5pt);
    \fill[white] ({4+cos(120)}, {sin(120)}) circle (1.5pt);
    \fill[white] ({4+cos(60)}, {sin(120)}) circle (1.5pt);
    \fill[white] (0, {2*sin(60)}) circle (1.5pt);
    \fill[white] (2, {2*sin(60)}) circle (1.5pt);
    \fill[white] ({cos(60)}, {3*sin(60)}) circle (1.5pt);
    \fill[white] ({1+cos(60)}, {3*sin(60)}) circle (1.5pt);
    \fill[white] (-0.5, {-sin(60)}) circle (1.5pt);
    \fill[white] (0.5, {-sin(60)}) circle (1.5pt);
    \fill[white] (2.5, {-sin(60)}) circle (1.5pt);
    \fill[white] (4.5, {-sin(60)}) circle (1.5pt);
    \fill[white] (2, {-2*sin(60)}) circle (1.5pt);
    \fill[white] (4, {-2*sin(60)}) circle (1.5pt);
    \fill[white] (2.5, {-3*sin(60)}) circle (1.5pt);
    \fill[white] (3.5, {-3*sin(60)}) circle (1.5pt);
    \fill[white] (0, -3) circle (4pt); 
  \end{tikzpicture}
  \caption{White cut vertices imply multiple Tangles}
  \label{white cut vertex example}
\end{figure}

In the next three sections, we will explore what happens when we add
polygons to the dual polyform of a regular Tangle.  But first, we close this section with a generalization of a result proven for square Tangles in \cite{fleron}.

\begin{theorem}[Gauss-Bonnet theorem for Tangles]\label{gauss-bonnet}
  A Tangle with $\frac{1}{n}$-circle links has exactly $n$ more convex links than concave links.
\end{theorem}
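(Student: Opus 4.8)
The plan is to run the standard planar Gauss--Bonnet / turning-angle argument, which is exactly why the statement deserves the name it has been given. Fix an orientation on the Tangle so that the region it encloses lies to its left, and let $\theta$ denote the continuous real-valued function recording the angle that the unit tangent vector makes with a fixed reference direction as we traverse the curve once. Continuity of $\theta$ is precisely the smoothness hypothesis built into the definition of a Tangle: the tangent directions of consecutive links agree at the point where they meet, so there are no ``corner'' jumps. Consequently the total increment of $\theta$ over one full circuit equals the sum of its increments over the individual links.

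Next I would compute the increment contributed by a single link. Each link is an arc of a circle of the common radius $r$ subtending a central angle of $\frac{2\pi}{n}$, and traversing a circular arc of central angle $\alpha$ rotates the tangent vector by $\alpha$ when one moves counterclockwise around that circle and by $-\alpha$ when one moves clockwise. So the real content is to match the paper's convex/concave dichotomy with this sense of traversal: a convex link is one across which the enclosed region is locally the disk bounded by the circle, which forces counterclockwise travel and hence a $+\frac{2\pi}{n}$ contribution, whereas a concave link has the enclosed region locally outside that disk, forcing clockwise travel and a $-\frac{2\pi}{n}$ contribution. Pinning down this sign bookkeeping is the one place where care is required, but it is an entirely local verification.

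Summing over all links, a Tangle with $c$ convex links and $d$ concave links has total turning $\frac{2\pi}{n}(c-d)$. On the other hand, by the theorem of turning tangents (Hopf's Umlaufsatz) the unit tangent of a positively oriented simple closed curve winds around exactly once, so this total equals $2\pi$. Equating the two expressions gives $\frac{2\pi}{n}(c-d) = 2\pi$, that is, $c - d = n$, as claimed. As a consistency check, the circle of radius $r$ is itself a regular Tangle of each of the three types, built from $n$ convex links and no concave links, and there $c - d = n$ exactly as predicted.
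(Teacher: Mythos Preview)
Your proof is correct and follows essentially the same route as the paper's: the paper integrates the signed curvature $\kappa(s)=\pm\tfrac{1}{r}$ over an arc-length parametrization and invokes Gauss--Bonnet to obtain $\oint\kappa\,ds=2\pi$, while you sum the per-link increments of the tangent-angle function and invoke the Umlaufsatz, but since $\int\kappa\,ds$ is precisely the increment of the tangent angle these are the same computation in slightly different language.
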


\begin{proof}
  Let $T$ be such a Tangle with $j$ convex links and $k$ concave links.  Suppose $T$ has radius $r$, and so each link has arc length $\frac{2\pi r}{n}$.  Let $\vb x:\left[0, \frac{2\pi r(j + k)}{n}\right]\to T$ be an arc length parametrization of $T$.  Then, if $\kappa(s)$ is the signed curvature of $T$ at $\vb x(s)$, we have, provided that $\vb x(s)$ is not one of the finitely many intersection points between a convex link and a concave link,
  \begin{equation*}
    \kappa(s) = \begin{cases}
      \frac{1}{r} &\text{if $\vb x(s)$ lies in a convex link} \\
      -\frac{1}{r} &\text{if $\vb x(s)$ lies in a concave link}
    \end{cases}
  \end{equation*}

  Therefore,
  \begin{equation*}
    \oint_T\kappa(s)\,ds = \frac{1}{r}\cdot\frac{2\pi rj}{n} - \frac{1}{r}\cdot\frac{2\pi rk}{n} = \frac{2\pi(j - k)}{n},
  \end{equation*}
  and so by the Gauss-Bonnet theorem \cite[\S 4.8]{struik}, $\frac{2\pi(j-k)}{n} = 2\pi$, or $j - k = n$.
\end{proof}

\section{Square Tangles}\label{square tangles}

In \cite{taylor,torrance2019enumeration}, another combinatorial object, the
\textit{dual graph}, was used to describe a square Tangle.
However, dual graphs and dual pseudo-polyominoes are equivalent.
Indeed, we may form the dual graph from the dual pseudo-polyomino by first
coloring the interior vertices to agree with the existing coloring on
the boundary (i.e., no adjacent vertices may have the same color),
drawing an edge between every pair of black vertices belonging to the
same square, and then removing all the original edges and white
vertices.  See \cref{dual graph}.

\begin{figure}
  \begin{tikzpicture}[scale=0.75]
    \node[shape=circle,fill] (a) at (0, 0) {};
    \node[shape=circle,draw] (b) at (2, 0) {};
    \node[shape=circle,fill] (c) at (4, 0) {};
    \node[shape=circle,draw] (d) at (0, 2) {};
    \node[shape=circle,draw] (f) at (4, 2) {};
    \node[shape=circle,fill] (g) at (0, 4) {};
    \node[shape=circle,draw] (h) at (2, 4) {};
    \node[shape=circle,fill] (i) at (4, 4) {};
    \draw (a) -- (b) -- (c) -- (f) -- (i) -- (h) -- (g) -- (d) -- (a);
    \draw (b) -- (h);
    \draw (d) -- (f);
    \draw[line width=4pt] (1, 0) arc (360:90:1) arc (-90:90:1) arc (270:0:1) arc (180:360:1) arc (180:-90:1) arc (90:270:1) arc (90:-180:1) arc (0:180:1);
    \fill[white] (0, -1) circle (1.5pt);
    \fill[white] (4, -1) circle (1.5pt);
    \fill[white] (-1, 0) circle (1.5pt);
    \fill[white] (1, 0) circle (1.5pt);
    \fill[white] (3, 0) circle (1.5pt);
    \fill[white] (5, 0) circle (1.5pt);
    \fill[white] (0, 1) circle (1.5pt);
    \fill[white] (4, 1) circle (1.5pt);
    \fill[white] (1, 2) circle (1.5pt);
    \fill[white] (3, 2) circle (1.5pt);
    \fill[white] (0, 3) circle (1.5pt);
    \fill[white] (4, 3) circle (1.5pt);
    \fill[white] (-1, 4) circle (1.5pt);
    \fill[white] (1, 4) circle (1.5pt);
    \fill[white] (3, 4) circle (1.5pt);
    \fill[white] (5, 4) circle (1.5pt);
    \fill[white] (0, 5) circle (1.5pt);
    \fill[white] (4, 5) circle (1.5pt);
  \end{tikzpicture}\quad\quad
  \begin{tikzpicture}[scale=0.75]
    \node[shape=circle,fill] (a) at (0, 0) {};
    \node[shape=circle,fill] (c) at (4, 0) {};
    \node[shape=circle,fill] (e) at (2, 2) {};
    \node[shape=circle,fill] (g) at (0, 4) {};
    \node[shape=circle,fill] (i) at (4, 4) {};
    \draw (a) -- (i);
    \draw (c) -- (g);
    \draw[line width=4pt] (1, 0) arc (360:90:1) arc (-90:90:1) arc (270:0:1) arc (180:360:1) arc (180:-90:1) arc (90:270:1) arc (90:-180:1) arc (0:180:1);
    \fill[white] (0, -1) circle (1.5pt);
    \fill[white] (4, -1) circle (1.5pt);
    \fill[white] (-1, 0) circle (1.5pt);
    \fill[white] (1, 0) circle (1.5pt);
    \fill[white] (3, 0) circle (1.5pt);
    \fill[white] (5, 0) circle (1.5pt);
    \fill[white] (0, 1) circle (1.5pt);
    \fill[white] (4, 1) circle (1.5pt);
    \fill[white] (1, 2) circle (1.5pt);
    \fill[white] (3, 2) circle (1.5pt);
    \fill[white] (0, 3) circle (1.5pt);
    \fill[white] (4, 3) circle (1.5pt);
    \fill[white] (-1, 4) circle (1.5pt);
    \fill[white] (1, 4) circle (1.5pt);
    \fill[white] (3, 4) circle (1.5pt);
    \fill[white] (5, 4) circle (1.5pt);
    \fill[white] (0, 5) circle (1.5pt);
    \fill[white] (4, 5) circle (1.5pt);
  \end{tikzpicture}
  \caption{Relationship between the dual pseudo-polyomino and the dual graph of a square Tangle}
  \label{dual graph}
\end{figure}
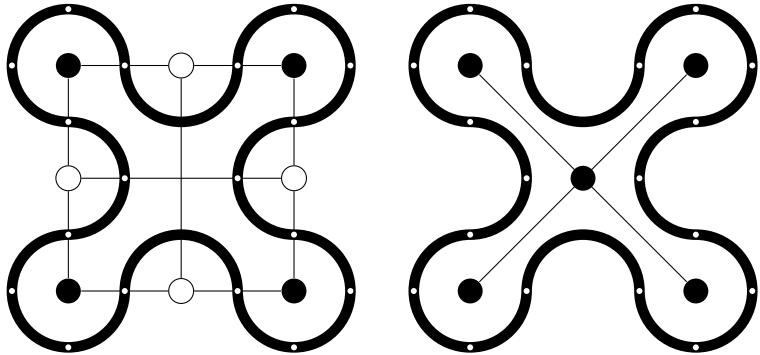

There are two fundamental operations on square Tangles that correspond to adding a single square to the dual pseudo-polyomino.
By \cref{no white cut vertices}, the new square must share at least
one black vertex with the existing dual pseudo-polyomino.
If the new square shares a single black vertex with the existing
dual pseudo-polyomino, then we add a 3-bulb.
Note that this corresponds to adding a leaf edge to the dual graph
as in \cite[Lemma 2.1]{torrance2019enumeration}.
If the new square shares two black vertices with the existing dual
pseudo-polyomino, then we remove an inverted 3-bulb.
This corresponds to adding an edge that completes a square in the
dual graph as in \cite[Lemma 2.2]{torrance2019enumeration}.
These two operations are referred to as \textit{shear insertion}
and \textit{shear reduction}, respectively, in \cite{fleron}.
See \cref{square tangle operations}.  In this figure, gray indicates the links that are being removed and black the links that are being added.

\begin{figure}
  \begin{tikzpicture}[scale=0.75]
    \node[shape=circle, fill] (a) at (-1,-1) {};
    \node[shape=circle, draw] (b) at (1,-1) {};
    \node[shape=circle, fill] (c) at (1,1) {};
    \node[shape=circle, draw] (d) at (-1,1) {};
    \draw (a) -- (b) -- (c) -- (d) -- (a);
    \draw[line width=4pt, color=lightgray] (-1,0) arc (90:0:1);
    \draw[line width=4pt] (0, -1) arc (180:90:1) arc (-90:180:1) arc (0:-90:1);
    \fill[white] (0,-1) circle (1.5pt);
    \fill[white] (1,0) circle (1.5pt);
    \fill[white] (2,1) circle (1.5pt);
    \fill[white] (1,2) circle (1.5pt);
    \fill[white] (0,1) circle (1.5pt);
    \fill[white] (-1,0) circle (1.5pt);

    \node[shape=circle, fill] (e) at (3,-1) {};
    \node[shape=circle, draw] (f) at (5,-1) {};
    \node[shape=circle, fill] (g) at (5,1) {};
    \node[shape=circle, draw] (h) at (3,1) {};
    \draw (e) -- (f) -- (g) -- (h) -- (e);
    \draw[line width=4pt, color=lightgray] (3, 0) arc (90:0:1) arc (-180:90:1)
    arc (270:180:1);
    \draw[line width=4pt] (3,0) arc (-90:0:1);
    \fill[white] (3,0) circle (1.5pt);
    \fill[white] (4,-1) circle (1.5pt);
    \fill[white] (5,-2) circle (1.5pt);
    \fill[white] (6,-1) circle (1.5pt);
    \fill[white] (5,0) circle (1.5pt);
    \fill[white] (4,1) circle (1.5pt);
  \end{tikzpicture}
  \caption{Shear insertion (left) and shear reduction (right)}
  \label{square tangle operations}
\end{figure}
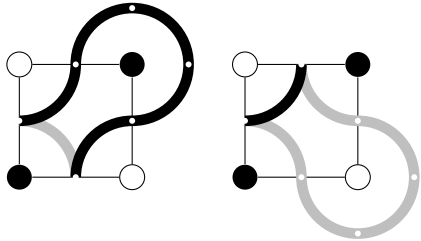

A third operation on square Tangles mentioned in the literature
(called \textit{reflection} in \cite{fleron} and
\textit{$\Omega$-rotation} in \cite{chan}) is actually a composition
of these two fundamental operations.
See \cref{square tangle reflection}.

\begin{figure}
  \begin{tikzpicture}[scale=0.75]
    \node[shape=circle, fill] (a) at (-1,-1) {};
    \node[shape=circle, draw] (b) at (1,-1) {};
    \node[shape=circle, fill] (c) at (1,1) {};
    \node[shape=circle, draw] (d) at (-1,1) {};
    \node[shape=circle, fill] (e) at (3,-1) {};
    \node[shape=circle, draw] (f) at (3,1) {};
    \draw (c) -- (b) -- (a) -- (d) -- (c) -- (f) -- (e) -- (b);
    \draw[line width=4pt, color=lightgray] (-1,0) arc (90:0:1) arc (180:360:1)
      arc (180:90:1);
    \draw[line width=4pt] (-1,0) arc (-90:0:1) arc (180:0:1) arc (180:270:1);
    \fill[white] (-1,0) circle (1.5pt);
    \fill[white] (0,1) circle (1.5pt);
    \fill[white] (0,-1) circle (1.5pt);
    \fill[white] (1,2) circle (1.5pt);
    \fill[white] (1,-2) circle (1.5pt);
    \fill[white] (2,1) circle (1.5pt);
    \fill[white] (2,-1) circle (1.5pt);
    \fill[white] (3,0) circle (1.5pt);
  \end{tikzpicture}

  \caption{Reflection or $\Omega$-rotation}
  \label{square tangle reflection}
\end{figure}
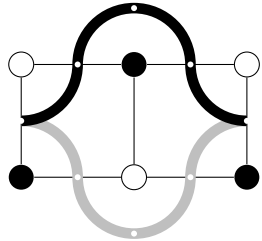

\begin{definition}
  A square Tangle is \textit{constructible} if it may be obtained from a circle by a sequence of shear insertions and/or reductions.
\end{definition}

The following result is essentially a special case of \cite[Lemma 2]{DDEFF}, which was proven for polyforms for which the constituent polygons share an edge.  Of course, in our setting, we also allow them to share only a vertex.

\begin{theorem}\label{square tangle constructible}
  Every square Tangle is constructible.
\end{theorem}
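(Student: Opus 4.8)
The plan is to induct on the size $n$ of the dual pseudo-polyomino $P$ of the given square Tangle. When $n=1$, $P$ is a single square, the dual polyform of a circle, so there is nothing to prove. When $n\ge 2$, it suffices to exhibit a single square $S\in P$ whose removal leaves the dual pseudo-polyomino of a (smaller) square Tangle: that smaller Tangle is then constructible by the inductive hypothesis, and $P$ is recovered from it by re-inserting $S$. This re-insertion is automatically one of our two fundamental operations. Indeed, a square has only two black corners, and by \cref{no white cut vertices} the square $S$ cannot be joined to the rest of $P$ through a white vertex alone; hence $S$ meets $P\setminus\{S\}$ in exactly one or exactly two black vertices, so re-inserting it is a shear insertion or a shear reduction, respectively. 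No separate check that $P$ is a legitimate Tangle is required, since $P$ was handed to us as one.

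So everything reduces to finding one removable square. I would split into two cases. If $P$ is a polyomino, i.e., all of its squares are joined through shared edges, then it has no cut vertices at all, and \cite[Lemma 2]{DDEFF} supplies a square whose removal leaves a simply connected polyomino, which is again a valid Tangle dual. If $P$ is not a polyomino, then some vertex-to-vertex connection occurs, and by the remarks preceding \cref{no white cut vertices} it occurs at a cut vertex, which by \cref{no white cut vertices} is black; call it $v$. The squares of $P$ incident to $v$ form a diagonally opposite pair, so $P=P_1\cup P_2$, where $P_1$ and $P_2$ are simply connected polyforms without white cut vertices whose underlying regions meet only in the point $v$, and $|P_i|<n$. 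Choose $v$ among all black cut vertices of $P$, and the piece (say $P_2$), so that $|P_2|$ is minimal; one verifies that $P_2$ is then a polyomino with no cut vertices, and we let $B$ be its unique square incident to $v$. If $P_2=\{B\}$, then $B$ meets $P\setminus\{B\}$ only in $v$ and its removal leaves the valid dual $P_1$. If $|P_2|\ge 2$, then $P_2$, being a simply connected polyomino of size at least two, has a square $S\ne B$ whose removal leaves a simply connected polyomino (by \cite[Lemma 2]{DDEFF}, such a polyomino has at least two removable squares); then $P\setminus\{S\}=(P_2\setminus\{S\})\cup P_1$ is again two disks glued along the single black vertex $v$, hence a valid Tangle dual.

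The routine verifications — that re-inserting a removed square is literally a shear insertion or shear reduction, and that a minimal piece $P_2$ must be a cut-vertex-free polyomino — I would leave as short arguments. The genuine obstacle is the topological bookkeeping at the black cut vertex $v$: one must be sure that cutting $P$ apart at $v$, and later deleting a square on one side of $v$, never quietly creates a hole or turns a white boundary vertex into a cut vertex. This is controlled by the fact that $P_1$ and $P_2$ overlap in only the single point $v$, so every gluing and ungluing happens at a wedge at a black vertex, where simple connectedness and the colour of any cut vertex are both preserved; pinning this down — together with the input from \cite[Lemma 2]{DDEFF} that a simply connected polyomino of size at least two has at least two removable squares — is the part that takes real care.
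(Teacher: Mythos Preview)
Your overall plan is sound but takes a different route from the paper's.  Rather than splitting into cases on whether $P$ is a true polyomino and then invoking \cite[Lemma~2]{DDEFF} on a cut-vertex-free piece, the paper forms a single auxiliary graph whose vertices are the squares of $P$, with an edge between two squares whenever they share a black boundary vertex or an interior vertex.  \Cref{no white cut vertices} makes this graph connected, so it has a spanning tree, and one removes the square corresponding to a leaf.  This treats polyominoes and genuine pseudo-polyominoes uniformly and avoids your minimality argument on $|P_2|$ and the need for \emph{two} removable squares from \cite{DDEFF}.  In exchange, your route is more explicit about why the remaining polyform stays simply connected and acquires no white cut vertices---points the spanning-tree argument passes over in a single sentence.

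One concrete slip: your base case is off.  In the paper's conventions the dual pseudo-polyomino of a circle is a single black vertex with \emph{zero} squares, so the induction should start at size $m=0$, not $m=1$; a single square is already the dual of the length-$8$ Tangle obtained from a circle by one shear insertion.  Shift the base case to $m=0$ and the rest of your argument goes through as written.
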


\begin{proof}
  We proceed by induction on the size $m$ of the square Tangle.

  For the base case $m=0$, the dual pseudo-polyomino contains a single black vertex and no squares.  In other words, the Tangle is a circle, and the result is immediate.

  Now assume $m>0$.  Form a graph by assigning every square of the dual pseudo-polyomino a vertex and adding edges for every pair of squares that share either a black boundary vertex or an interior vertex.  By \cref{no white cut vertices}, this graph is connected, and so it has a spanning tree \cite[Theorem 4.10]{CZ}.  Every tree has at least one leaf vertex (or in the case $m=1$, a single isolated vertex).  Choose one of these and remove the corresponding square from the dual pseudo-polyomino.  The resulting square Tangle is constructible by induction, and so by adding back this square, we see that the original square Tangle is also constructible.
\end{proof}

This fact allows us to obtain new proofs of two well-known results.  The first of these is originally due to Wolfenden \cite{fleron}.
The use of induction similar to ours to give an alternate proof was suggested in \cite{FH}.

\begin{corollary}\label{square tangle length}
  The length of a square Tangle is a multiple of 4.
\end{corollary}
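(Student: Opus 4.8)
The plan is to induct, using the constructibility result \cref{square tangle constructible}. Since every square Tangle is obtained from a circle by a finite sequence of shear insertions and shear reductions, I would induct on the number of operations in such a sequence. The base case is the circle, which is assembled from four quarter-circle links and hence has length $4$, a multiple of $4$.

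For the inductive step, the essential observation is that each of the two fundamental operations changes the length by exactly $\pm 4$. A shear insertion adds a square meeting the existing dual pseudo-polyomino in a single black vertex; as described just before \cref{square tangle operations}, this replaces a single convex link by a $3$-bulb, i.e., by three convex links flanked by one concave link on each side, a net change of $5 - 1 = 4$ links. A shear reduction adds a square meeting the existing dual pseudo-polyomino in two black vertices and performs the reverse substitution, replacing an inverted $3$-bulb (two convex and three concave links) by a single concave link, a net change of $1 - 5 = -4$ links. Both computations can be read off directly from \cref{square tangle operations}. Consequently, if the length before an operation is a multiple of $4$, so is the length afterward, and the induction goes through.

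The step requiring the most care is this bookkeeping: one must be sure that shear insertion and shear reduction are genuinely the only operations corresponding to adding a single square, which follows from \cref{no white cut vertices} together with the fact that a square of the square tiling has exactly two black vertices (diagonally opposite), of which a newly added square shares either one or two with the existing polyform; and one must check that the net change is $\pm 4$ rather than some larger multiple of $4$. As a consistency check, each operation alters the number $j$ of convex links and the number $k$ of concave links by the same amount $\pm 2$, so the difference $j - k$ is preserved, in agreement with \cref{gauss-bonnet}, which forces $j - k = 4$ for every square Tangle. In particular $k$ is always even, so the length $j + k = 2k + 4$ is a multiple of $4$; this is an alternative way to conclude once the parity of $k$ has been tracked through the induction.
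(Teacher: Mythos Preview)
Your proof is correct and follows the same approach as the paper: use constructibility (\cref{square tangle constructible}) to reduce to the base case of a circle (length $4$) and observe that each shear insertion/reduction changes the length by exactly $\pm 4$. The paper states this in a single sentence, while you spell out the $5-1$ and $1-5$ bookkeeping and add the (correct but unnecessary) consistency check via \cref{gauss-bonnet}; the core argument is identical.
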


\begin{proof}
  Since a circle has length 4, a shear insertion results in a net increase of 4 links, and a shear reduction a net decrease of 4 links, the result follows immediately.
\end{proof}

For this reason, we define the \textit{class} of a square Tangle to be the number $c$ for which the length is $4c$.

The second result was originally proven in \cite{torrance2019enumeration}
using the dual graph and Euler's polyhedral formula.

\begin{corollary}\label{square tangle area}
  The area enclosed by a square Tangle of size $m$ and radius $r$ is
  $(4m+\pi)r^2$.
\end{corollary}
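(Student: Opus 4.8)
The plan is to induct on the size $m$, peeling off one polygon of the dual pseudo-polyomino at each step. The base case $m = 0$ is the circle of radius $r$, which encloses $\pi r^2 = (4\cdot 0 + \pi)r^2$. For the inductive step, recall from the proof of \cref{square tangle constructible} that a square Tangle of size $m \geq 1$ is obtained from one of size $m - 1$ by a single shear insertion or shear reduction, each of which adds to the dual pseudo-polyomino one square of side length $2r$, hence of area $4r^2$. It therefore suffices to show that each of these two operations increases the enclosed area by exactly $4r^2$; granting this, the enclosed area equals $\pi r^2 + 4 m r^2 = (4m + \pi)r^2$.

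The crux, and the step I expect to be the main obstacle, is a local area computation. Write $S$ for the new square, with black corners $v$ and $v'$ and white corners $w_1, w_2$, and for a point $x$ let $D_x$ denote the closed disk of radius $r$ centered at $x$. The centers of any two of $D_v, D_{v'}, D_{w_1}, D_{w_2}$ lie at distance at least $2r$, so these four disks have pairwise disjoint interiors, and each meets $\overline S$ in a quarter-disk of area $\tfrac{\pi r^2}{4}$. Both operations modify the Tangle only inside $D_v \cup D_{v'} \cup D_{w_1} \cup D_{w_2}$, so the change in enclosed area can be read off from these disks together with $\overline S$.

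Take first a shear insertion, where $S$ meets the old pseudo-polyomino only in the black corner $v$. Then $v', w_1, w_2$ lie at distance at least $2r$ from the old pseudo-polyomino, so none of $D_{v'}, D_{w_1}, D_{w_2}$ meets the old enclosed region (up to a set of measure zero); and in the $90^\circ$ sector at $v$ pointing into $S$ the old Tangle traces a quarter-arc of $\partial D_v$, which is precisely the convex link removed by the shear insertion. Setting up coordinates with $\overline S = [0,2r]^2$ and checking orientations, one finds that within $\overline S \cup D_{v'} \cup D_{w_1} \cup D_{w_2}$ the old enclosed region is just the quarter-disk $D_v \cap \overline S$, while the new enclosed region is
\[
  (D_v \cap \overline S) \;\cup\; D_{v'} \;\cup\; \bigl(\overline S \setminus (D_{w_1} \cup D_{w_2})\bigr).
\]
Since $D_{v'}$ is disjoint from $D_{w_1}$ and $D_{w_2}$ and meets $\overline S$ in the quarter-disk at $v'$, the enclosed area increases by
\[
  \operatorname{area}(D_{v'}) + \operatorname{area}\bigl(\overline S \setminus (D_{w_1} \cup D_{w_2})\bigr) - \operatorname{area}(D_{v'} \cap \overline S) - \operatorname{area}(D_v \cap \overline S) = \pi r^2 + \Bigl(4 - \tfrac{\pi}{2}\Bigr) r^2 - \tfrac{\pi}{4} r^2 - \tfrac{\pi}{4} r^2 = 4r^2 ,
\]
the circular contributions cancelling. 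For a shear reduction the new square instead fills a three-sided notch of the pseudo-polyomino, with a removed inverted $3$-bulb giving way to a single concave link; a similar bookkeeping over $D_v, D_{v'}, D_{w_1}, D_{w_2}$ applies, and the $\pi r^2$ terms again cancel to leave a net gain of $4r^2$. Combining these two facts with the base case $\pi r^2$ completes the induction and gives the formula $(4m+\pi)r^2$. I anticipate that the genuinely delicate part is identifying exactly which sub-regions enter and leave the enclosed area in each case --- in particular verifying the orientations and that the four corner disks do not interact with the unmodified part of the Tangle --- rather than the arithmetic, which is forced once the local picture is pinned down.
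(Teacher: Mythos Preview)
Your proof is correct and follows the same inductive strategy as the paper: establish the base case $m=0$ and show that each shear insertion or reduction increases the enclosed area by exactly $4r^2$. The difference lies only in how that increment is verified. The paper observes that in \emph{both} operations the region gained is precisely the region enclosed by a 3-bulb, and then shows via a cut-and-rearrange picture (\cref{3-bulb area}) that this region has the same area as a $2r\times 2r$ square. Your explicit bookkeeping with the four corner disks $D_v, D_{v'}, D_{w_1}, D_{w_2}$ reaches the same $4r^2$ by direct inclusion--exclusion, and is a valid alternative; the paper's unified ``3-bulb region'' viewpoint is shorter and handles insertion and reduction simultaneously, whereas your approach makes the cancellation of the $\pi r^2$ terms completely explicit and does not rely on a figure.
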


\begin{proof}
  The initial circle has area $\pi r^2$, and so it remains to determine the area added by each shear insertion or reduction.   In both cases, we are adding to the region enclosed by the square Tangle the area enclosed by a 3-bulb.  It can be seen by breaking this region into pieces and rearranging that its area is exactly that of a square with side length $2r$.   See \cref{3-bulb area}.
\end{proof}

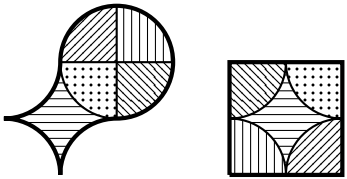
\begin{figure}
  \begin{tikzpicture}[scale=0.75]
    \fill[pattern=horizontal lines] (1,0) arc (270:180:1) arc
        (0:-90:1) arc (90:0:1) arc (180:90:1);
    \fill[pattern=north west lines] (1,0) arc (-90:0:1) -- (1,1);
    \fill[pattern=vertical lines] (2,1) arc (0:90:1) -- (1,1);
    \fill[pattern=north east lines] (0,1) arc (180:90:1) -- (1,1);
    \fill[pattern=dots] (1, 0) arc (270:180:1) -- (1, 1);
    \draw[thick] (0,1) -- (2,1);
    \draw[thick] (1, 0) -- (1, 2);
    \draw[thick] (1, 0) arc (270:180:1);
    \draw[ultra thick] (1,0) arc (-90:180:1) arc (0:-90:1) arc (90:0:1) arc
        (180:90:1);

    \fill[pattern=horizontal lines] (5,0) arc (270:180:1) arc
        (0:-90:1) arc (90:0:1) arc (180:90:1);
    \fill[pattern=north west lines] (3,0) arc (-90:0:1) -- (3,1);
    \fill[pattern=vertical lines] (4,-1) arc (0:90:1) -- (3,-1);
    \fill[pattern=north east lines] (4,-1) arc (180:90:1) -- (5,-1);
    \fill[pattern=dots] (5, 0) arc (270:180:1) -- (5, 1);
    \draw[thick] (4,1) arc (0:-90:1) arc (90:0:1) arc (180:90:1) arc (270:180:1);
    \draw[ultra thick] (3,-1) -- (5,-1) -- (5,1) -- (3,1) -- (3,-1);
  \end{tikzpicture}
  \caption{Area enclosed by a 3-bulb}\label{3-bulb area}
\end{figure}

\section{Hexagonal Tangles}\label{hexagonal tangles}

As mentioned in \cref{dual polyforms}, hexagons in a hexagonal tiling of a plane always share an edge, and so a new hexagon that has been added to the dual polyhex of a hexagonal Tangle will always share at least black vertex with the existing hexagons.  This results in three distinct fundamental operations.

If the new hexagon shares a single black vertex with the existing hexagons, then we add two 2-bulbs that share a concave link.  If it shares two black vertices with the existing hexagons, then we flip an inverted 2-bulb into a 2-bulb.  If it shares all three black vertices with the existing hexagons, then we remove two inverted 2-bulbs that share a convex link.  We denote these operations as \textit{shear insertion}, \textit{reflection}, and \textit{shear reduction}, respectively, in analogy with the operations on square Tangles.  See \cref{hexagonal tangle operations}.  As before, gray indicates links that are being removed and black links that are being added.

\begin{figure}
  \begin{tikzpicture}[scale=0.75]
    \node[shape=circle, fill] (a) at (0:2) {};
    \node[shape=circle, draw] (b) at (60:2) {};
    \node[shape=circle, fill] (c) at (120:2) {};
    \node[shape=circle, draw] (d) at (180:2) {};
    \node[shape=circle, fill] (e) at (240:2) {};
    \node[shape=circle, draw] (f) at (300:2) {};
    \draw (a) -- (b) -- (c) -- (d) -- (e) -- (f) -- (a);
    \draw[line width=4pt,color=lightgray] (0, {-sqrt(3)}) arc (0:120:1);
    \draw[line width=4pt] (0, {-sqrt(3)}) arc (180:60:1) arc (-120:120:1) arc (300:180:1) arc (0:240:1) arc (60:-60:1);
    \fill[white] (0:3) circle (1.5pt);
    \fill[white] (30:{sqrt(3)}) circle (1.5pt);
    \fill[white] (90:{sqrt(3)}) circle (1.5pt);
    \fill[white] (120:3) circle (1.5pt);
    \fill[white] (150:{sqrt(3)}) circle (1.5pt);
    \fill[white] (210:{sqrt(3)}) circle (1.5pt);
    \fill[white] (270:{sqrt(3)}) circle (1.5pt);
    \fill[white] (330:{sqrt(3)}) circle (1.5pt);
    \fill[white] (0, {-sqrt(3) - 1}) circle (2pt); 
  \end{tikzpicture}
  \begin{tikzpicture}[scale=0.75]
    \node[shape=circle, fill] (a) at (0:2) {};
    \node[shape=circle, draw] (b) at (60:2) {};
    \node[shape=circle, fill] (c) at (120:2) {};
    \node[shape=circle, draw] (d) at (180:2) {};
    \node[shape=circle, fill] (e) at (240:2) {};
    \node[shape=circle, draw] (f) at (300:2) {};
    \draw (a) -- (b) -- (c) -- (d) -- (e) -- (f) -- (a);
    \draw[line width=4pt,color=lightgray] (210:{sqrt(3)}) arc (120:0:1) arc (-180:60:1) arc (240:120:1);
    \draw[line width=4pt] (30:{sqrt(3)}) arc (300:180:1) arc (0:240:1) arc (60:-60:1);
    \fill[white] (30:{sqrt(3)}) circle (1.5pt);
    \fill[white] (90:{sqrt(3)}) circle (1.5pt);
    \fill[white] (120:3) circle (1.5pt);
    \fill[white] (150:{sqrt(3)}) circle (1.5pt);
    \fill[white] (210:{sqrt(3)}) circle (1.5pt);
    \fill[white] (270:{sqrt(3)}) circle (1.5pt);
    \fill[white] (300:3) circle (1.5pt);
    \fill[white] (330:{sqrt(3)}) circle (1.5pt);
  \end{tikzpicture}
  \begin{tikzpicture}[scale=0.75]
    \node[shape=circle, fill] (a) at (0:2) {};
    \node[shape=circle, draw] (b) at (60:2) {};
    \node[shape=circle, fill] (c) at (120:2) {};
    \node[shape=circle, draw] (d) at (180:2) {};
    \node[shape=circle, fill] (e) at (240:2) {};
    \node[shape=circle, draw] (f) at (300:2) {};
    \draw (a) -- (b) -- (c) -- (d) -- (e) -- (f) -- (a);
    \draw[line width=4pt,color=lightgray] (210:{sqrt(3)}) arc (120:0:1) arc (-180:60:1) arc (240:120:1) arc (-60:180:1) arc (0:-120:1);
    \draw[line width=4pt] (150:{sqrt(3)}) arc (60:-60:1);
    \fill[white] (30:{sqrt(3)}) circle (1.5pt);
    \fill[white] (60:3) circle (1.5pt);
    \fill[white] (90:{sqrt(3)}) circle (1.5pt);
    \fill[white] (150:{sqrt(3)}) circle (1.5pt);
    \fill[white] (210:{sqrt(3)}) circle (1.5pt);
    \fill[white] (270:{sqrt(3)}) circle (1.5pt);
    \fill[white] (300:3) circle (1.5pt);
    \fill[white] (330:{sqrt(3)}) circle (1.5pt);
  \end{tikzpicture}
  \caption{Shear insertion (left), reflection (center), and shear reduction (right)}
  \label{hexagonal tangle operations}
\end{figure}
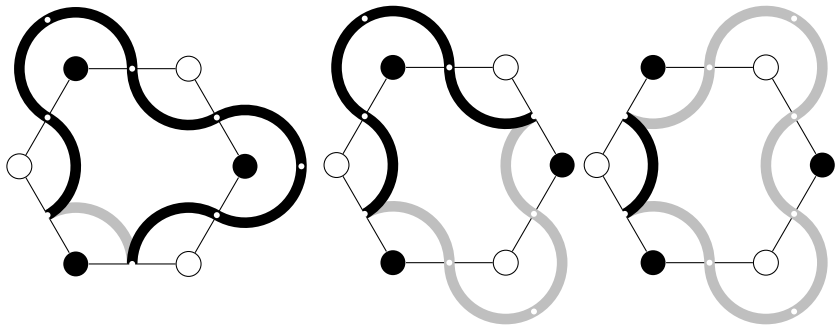

\begin{definition}
  A hexagonal Tangle is \textit{constructible} if it may be obtained from a circle by a sequence of shear insertions, reflections, and/or shear reductions.
\end{definition}

The following result is immediate using the same proof as that of \cref{square tangle constructible}, but replacing each instance of the word ``square'' with ``hexagon'' and ``pseudo-polyomino'' with ``polyhex''.

\begin{theorem}
  Every hexagonal Tangle is constructible.
\end{theorem}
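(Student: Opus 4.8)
The plan is to reproduce the proof of \cref{square tangle constructible} almost verbatim, substituting ``hexagon'' for ``square'' and ``polyhex'' for ``pseudo-polyomino'', so I will only highlight the places where the hexagonal setting either simplifies or asks for a word of justification.

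I would argue by induction on the size $m$ of the hexagonal Tangle. The base case $m=0$ is unchanged: the dual polyhex is a single black vertex, the Tangle is a circle, and a circle is constructible by definition. For $m>0$, I would form the graph $G$ with one vertex per hexagon of the dual polyhex and an edge for every pair of hexagons sharing a black boundary vertex or an interior vertex. As in the square case, \cref{no white cut vertices} forces $G$ to be connected; in fact, the hexagonal case simplifies here, since two hexagons of the hexagonal tiling that share any vertex already share an edge, so $G$ coincides with the ordinary edge-adjacency graph of the polyhex and its connectedness is automatic. Take a spanning tree of $G$, pick a leaf hexagon $H$ (the lone vertex when $m=1$), and delete $H$ from the dual polyhex; by the inductive hypothesis the resulting hexagonal Tangle, of size $m-1$, is constructible.

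It then remains to recognize the re-attachment of $H$ as a legal move. By \cref{no white cut vertices}, $H$ meets the rest of the polyhex in at least one black vertex, and a hexagon has exactly three black boundary vertices, so $H$ shares one, two, or three of them with the existing hexagons. These are precisely the three cases catalogued at the start of \cref{hexagonal tangles}: sharing one black vertex is a shear insertion, two is a reflection, and three is a shear reduction. Hence the original Tangle is obtained from a circle by a sequence of shear insertions, reflections, and shear reductions, i.e., it is constructible.

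The one step that is not a literal copy of the square argument, and the place where I expect the real content to sit, is the verification that deleting the leaf hexagon $H$ actually leaves a dual polyhex — a simply connected polyhex with no white cut vertices — rather than something disconnected or with a hole; a leaf of a spanning tree takes care of connectedness, but simple connectedness needs a genuinely careful choice of the polygon to remove (removing a ``surrounded'' polygon that happens to be a spanning-tree leaf would open a hole). This is exactly the content of the polyform-decomposition result \cite[Lemma 2]{DDEFF}, and the point worth stressing is that a polyhex is an honest edge-connected polyform — unlike the pseudo-polyomino and pseudo-polyiamond cases — so \cite[Lemma 2]{DDEFF} applies directly with no modification, which is what makes the hexagonal statement \emph{immediate}.
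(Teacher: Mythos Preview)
Your proposal is correct and follows exactly the paper's approach: the paper's own proof is a single sentence declaring the result ``immediate using the same proof as that of \cref{square tangle constructible}, but replacing each instance of the word `square' with `hexagon' and `pseudo-polyomino' with `polyhex'.'' Your additional scrutiny of the simple-connectedness step---and the observation that for polyhexes \cite[Lemma~2]{DDEFF} applies without the pseudo- caveat---is more care than the paper itself exercises at this point, but it does not change the route.
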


We next obtain the following analogues to Corollaries \ref{square tangle length} and \ref{square tangle area}.

\begin{corollary}\label{hexagonal tangle length}
  The length of a hexagonal Tangle is congruent to 3 modulo 6.
\end{corollary}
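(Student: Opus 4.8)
The plan is to mirror the proof of \cref{square tangle length}. Since every hexagonal Tangle is constructible, it suffices to track the length along a construction: the starting object, a circle regarded as a hexagonal Tangle, is built from three $\frac{1}{3}$-circle links and so has length $3$, so I only need to record how each of the three fundamental operations of \cref{hexagonal tangle operations} changes the length.

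These net changes can be read directly off that figure. A \emph{shear insertion} adds two $2$-bulbs sharing a concave link while deleting a single link; counting arcs, this removes one link and adds seven, for a net gain of $6$. A \emph{reflection} turns an inverted $2$-bulb into a $2$-bulb, trading four links for four links, so the length is unchanged. A \emph{shear reduction}, being the inverse of a shear insertion, removes seven links and adds one, for a net loss of $6$. As a sanity check, these counts are consistent with \cref{gauss-bonnet}: because $j - k$ is pinned at $3$, every operation changes the convex count $j$ and the concave count $k$ by the same amount, so the net change in the length $j + k$ is always even (here $0$ or $\pm 6$), and the concave count, which is $0$ for the circle, remains a multiple of $3$ throughout.

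Combining these observations, a circle has length $3$ and every operation alters the length by a multiple of $6$, so induction on the number of operations in a construction shows that the length of any hexagonal Tangle is congruent to $3$ modulo $6$.

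The main obstacle is not the arithmetic but the structural input it relies on: that sharing one, two, or three black vertices are the only ways a single hexagon can be adjoined to (or removed from) the dual polyhex, and that each case forces exactly the local configuration shown in \cref{hexagonal tangle operations}. This is the hexagonal analogue of the case analysis underlying \cref{square tangle operations} and depends on \cref{no white cut vertices} together with the fact that two hexagons of the hexagonal tiling that share a vertex must also share an edge. Granting that analysis --- as we may, since it precedes this corollary --- the modular count above completes the proof.
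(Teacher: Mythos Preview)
Your proof is correct and follows essentially the same approach as the paper's: start from the circle of length $3$ and note that shear insertion, reflection, and shear reduction change the length by $+6$, $0$, and $-6$ respectively, so constructibility gives the result. The paper's version is terser---it simply states the net changes without the explicit link counts or the Gauss--Bonnet sanity check---but the argument is the same.
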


\begin{proof}
  The initial circle has length 3, each shear insertion increases the length by 6, each reflection results in no change in length, and each shear reduction decreases the length by 6.
\end{proof}

\begin{corollary}\label{hexagonal tangle area}
  The area enclosed by a hexagonal Tangle of size $m$ and radius $r$ is $(6\sqrt 3 m + \pi)r^2$.
\end{corollary}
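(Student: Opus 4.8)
The plan is to imitate the proof of \cref{square tangle area}. A circle encloses area $\pi r^2$ and has size $0$, so the formula holds for $m = 0$. By the preceding theorem, every hexagonal Tangle is obtained from a circle by a sequence of shear insertions, reflections, and/or shear reductions, and each of these three operations adjoins exactly one hexagon to the dual polyhex, hence raises the size by exactly $1$. Consequently a hexagonal Tangle of size $m$ is reached from the circle in exactly $m$ such steps, and it suffices to prove that each step increases the enclosed area by the area of a regular hexagon of the tiling. Since kissing circles in the packing have centres $2r$ apart, that hexagon has side length $2r$, so its area is $\frac{3\sqrt 3}{2}(2r)^2 = 6\sqrt 3\, r^2$; the formula $(6\sqrt 3\, m + \pi)r^2$ then follows by induction on $m$.

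To verify the per-step area change, in each of the three cases (\cref{hexagonal tangle operations}) I would consider the region bounded by the arcs of the old Tangle that are deleted together with the arcs of the new Tangle that are inserted — that is, the symmetric difference of the two enclosed regions — and show that it dissects and rearranges into exactly the adjoined hexagon. The mechanism is the same as in \cref{3-bulb area}: each circular segment cut off by an inserted convex link is congruent to one of the circular segments bitten out by an inserted concave link, so all the curved pieces cancel in pairs and what remains is a straight-sided regular hexagon of side $2r$. Note that this area change is $+6\sqrt 3\, r^2$ in all three cases, including shear reduction, even though shear reduction \emph{decreases} the length (\cref{hexagonal tangle length}): as in the square case, removing inward-pointing links enlarges the enclosed region. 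I would accompany each case with a picture in the style of \cref{3-bulb area}.

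The main obstacle is the dissection itself, carried out three times. The shear-insertion and shear-reduction cases are closest to the square picture — in one we are adding two outward bulbs, in the other filling in two inward notches — but the reflection case is more delicate, since it simultaneously trades two concave links for two convex links and two convex links for two concave links, and one must check that these four altered segments, together with the hexagon, fit back together without gap or overlap. Once the three dissections are established, the induction is immediate, exactly as in the proof of \cref{square tangle area}.
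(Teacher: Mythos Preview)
Your proposal is correct and matches the paper's approach: induction on size via constructibility, with each fundamental operation increasing the enclosed area by exactly the area $6\sqrt 3\,r^2$ of the adjoined hexagon, verified by a dissection-and-rearrangement argument illustrated with pictures. The paper streamlines the casework slightly by observing that in all three operations the added region is the same shape---``the area enclosed by two 2-bulbs''---so a single dissection (\cref{two 2-bulb area}) handles everything at once rather than three separate ones.
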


\begin{proof}
  The area of the initial circle is $\pi r^2$.  For each fundamental operation, we add the area enclosed by two 2-bulbs.  As can be seen in \cref{two 2-bulb area}, this area is equal to that enclosed by a regular hexagon with side length $2r$, and so has area $6\sqrt 3 r^2$.
\end{proof}

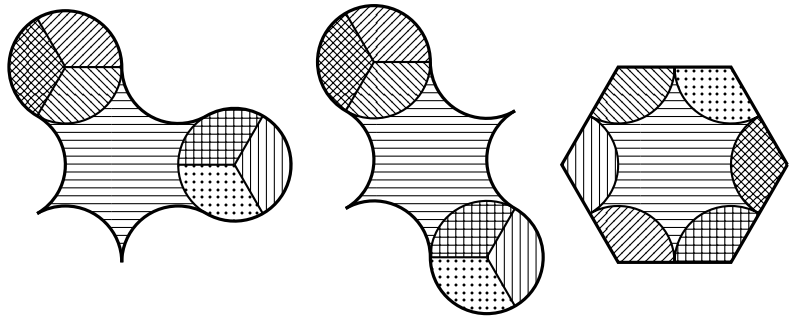
\begin{figure}
  \begin{tikzpicture}[scale=0.75]
    \fill[pattern=horizontal lines] (30:{sqrt(3)}) arc (300:180:1) arc (360:240:1) arc (60:-60:1) arc (120:0:1) arc (180:60:1) arc (240:120:1);
    \fill[pattern=north west lines] (120:2) -- (90:{sqrt(3)}) arc (360:240:1);
    \fill[pattern=crosshatch] (120:2) -- (150:{sqrt(3)}) arc (240:120:1);
    \fill[pattern=north east lines] (120:2) -- (120:3) arc (120:0:1);
    \fill[pattern=grid] (0:2) -- (0:1) arc (180:60:1);
    \fill[pattern=vertical lines] (0:2) -- (2.5, {sqrt(3)/2}) arc (60:-60:1);
    \fill[pattern=dots] (0:2) -- (2.5, {-sqrt(3)/2}) arc (300:180:1);
    \draw[very thick] (0, {-sqrt(3)}) arc (0:120:1);
    \draw[very thick] (0, {-sqrt(3)}) arc (180:60:1) arc (-120:120:1) arc (300:180:1) arc (0:240:1) arc (60:-60:1);
    \draw[thick] (-30:{sqrt(3)}) arc (240:120:1);
    \draw[thick] (90:{sqrt(3)}) arc (360:240:1);
    \draw[thick] (120:2) -- (90:{sqrt(3)});
    \draw[thick] (120:2) -- (150:{sqrt(3)});
    \draw[thick] (120:2) -- (120:3);
    \draw[thick] (0:2) -- (0:1);
    \draw[thick] (0:2) -- ({2 + 1/2}, {sqrt(3)/2});
    \draw[thick] (0:2) -- ({2 + 1/2}, {-sqrt(3)/2});
    \fill[white] (0, {-1 - sqrt(3)}) circle (1.5pt); 
  \end{tikzpicture}
  \begin{tikzpicture}[scale=0.75]
    \fill[pattern=horizontal lines] (30:{sqrt(3)}) arc (300:180:1) arc (360:240:1) arc (60:-60:1) arc (120:0:1) arc (180:60:1) arc (240:120:1);
    \fill[pattern=north west lines] (120:2) -- (90:{sqrt(3)}) arc (360:240:1);
    \fill[pattern=crosshatch] (120:2) -- (150:{sqrt(3)}) arc (240:120:1);
    \fill[pattern=north east lines] (120:2) -- (120:3) arc (120:0:1);
    \fill[pattern=grid] (300:2) -- (270:{sqrt(3)}) arc (180:60:1);
    \fill[pattern=vertical lines] (300:2) -- (1.5, {-sqrt(3)/2}) arc (60:-60:1);
    \fill[pattern=dots] (300:2) -- (1.5, {-3*sqrt(3)/2}) arc (300:180:1);
    \draw[very thick] (210:{sqrt(3)}) arc (120:0:1) arc (-180:60:1) arc (240:120:1);
    \draw[very thick] (30:{sqrt(3)}) arc (300:180:1) arc (0:240:1) arc (60:-60:1);
    \draw[thick] (-30:{sqrt(3)}) arc (60:180:1);
    \draw[thick] (90:{sqrt(3)}) arc (360:240:1);
    \draw[thick] (120:2) -- (90:{sqrt(3)});
    \draw[thick] (120:2) -- (150:{sqrt(3)});
    \draw[thick] (120:2) -- (120:3);
    \draw[thick] (300:2) -- (270:{sqrt(3)});
    \draw[thick] (300:2) -- (1.5, {-sqrt(3)/2});
    \draw[thick] (300:2) -- (1.5, {-3*sqrt(3)/2});
  \end{tikzpicture}
  \begin{tikzpicture}[scale=0.75]
    \fill[pattern=horizontal lines] (30:{sqrt(3)}) arc (300:180:1) arc (360:240:1) arc (60:-60:1) arc (120:0:1) arc (180:60:1) arc (240:120:1);
    \fill[pattern=dots] (30:{sqrt(3)}) arc (300:180:1) -- (60:2);
    \fill[pattern=north west lines] (90:{sqrt(3)}) arc (360:240:1) -- (120:2);
    \fill[pattern=vertical lines] (150:{sqrt(3)}) arc (60:-60:1) -- (180:2);
    \fill[pattern=north east lines] (210:{sqrt(3)}) arc (120:0:1) -- (240:2);
    \fill[pattern=grid] (270:{sqrt(3)}) arc (180:60:1) -- (300:2);
    \fill[pattern=crosshatch] (330:{sqrt(3)}) arc (240:120:1) -- (0:2);
    \draw[very thick] (0:2) -- (60:2) -- (120:2) -- (180:2) -- (240:2) -- (300:2) -- (0:2);
    \draw[thick] (30:{sqrt(3)}) arc (300:180:1) arc (360:240:1) arc (60:-60:1) arc (120:0:1) arc (180:60:1) arc (240:120:1);
    \fill[white] (0, {-1 - sqrt(3)}) circle (1.5pt); 
  \end{tikzpicture}

  \caption{Area enclosed by two 2-bulbs}
  \label{two 2-bulb area}
\end{figure}

\section{Triangular Tangles}\label{triangular tangles}

We now move to triangular Tangles.  They are trickier than the other two families, as vertices of boundaries of pseudo-polyiamonds are in general not 2-colorable.  In particular, we cannot simply add a single triangle to the dual pseudo-polyiamond of a triangular Tangle and preserve the chromatic number of its boundary.  Instead, we will need to add two triangles simultaneously.  We will see that it is sufficient to add two triangles that share at least one vertex (a \textit{pseudo-diamond}).

There are three distinct pseudo-diamonds: the \textit{diamond}, in which the triangles share an edge, and two pseudo-diamonds for which the triangles share only a vertex.  We will call the the pseudo-diamond in which the edges of the triangles are separated by $120^\circ$ the \textit{bowtie} and the one in which an edge from each triangle lies on a common line the \textit{vampire teeth}.

The addition of these three pseudo-diamonds to the dual pseudo-polyiamond of a triangular Tangle corresponds to five fundamental operations.

If we add a diamond that shares a single black vertex (shared by both triangles) with the existing pseudo-polyiamond, then we add a 4-bulb.  If we add a diamond that shares two black vertices (one from each triangle, not incident to the shared edge), then we remove an inverted 4-bulb.  We denote these operations \textit{4-bulb shear insertion} and \textit{4-bulb shear reduction}, respectively.

If we add vampire teeth that share two black vertices (one from each triangle, not on the common line) with the existing pseudo-polyiamond, then we add a 3-bulb.  If we add vampire teeth that share two black vertices (one from each triangle, on the common line), then we remove an inverted 3-bulb.  These operations will be denoted as \textit{3-bulb shear insertion} and \textit{3-bulb shear reduction}, respectively.

If we add a bowtie that shares two black vertices (one from each triangle, not the shared vertex) with the existing pseudo-polyiamond, then we switch an inverted 2-bulb to a 2-bulb.  As with the corresponding operations for square and hexagonal Tangles, this is a \textit{reflection}.

See \cref{triangular tangle operations} for an illustration of these five fundamental operations.  Note some of them result in one of the existing vertices changing color, i.e., a circle switching from exterior to interior.  These are denoted using gray.

\begin{figure}
  \begin{tikzpicture}[scale=0.75]
    \node[shape=circle, fill] (a) at (0:0) {};
    \node[shape=circle, fill] (b) at (0:2) {};
    \node[shape=circle, draw] (c) at (60:2) {};
    \node[shape=circle, draw] (d) at (-60:2) {};
    \draw (a) -- (b) -- (c) -- (a) -- (d) --(b);
    \draw[line width=4pt,lightgray] (-60:1) arc (-60:60:1);
    \draw[line width=4pt] (-60:1) arc (120:60:1) arc (-120:120:1) arc (300:240:1);
    \fill[white] (-60:1) circle (1.5pt);
    \fill[white] (-30:{sqrt(3)}) circle (1.5pt);
    \fill[white] ({-asin(1/2*sqrt(3/7))}:{sqrt(7)}) circle (1.5pt);
    \fill[white] (0:3) circle (1.5pt);
    \fill[white] ({asin(1/2*sqrt(3/7))}:{sqrt(7)}) circle (1.5pt);
    \fill[white] (30:{sqrt(3)}) circle (1.5pt);
    \fill[white] (60:1) circle (1.5pt);
    \fill[white] (0:1) circle (1.5pt);
  \end{tikzpicture}
  \begin{tikzpicture}[scale=0.75]
    \node[shape=circle, draw] (a) at (0:0) {};
    \node[shape=circle, draw] (b) at (0:2) {};
    \node[shape=circle, fill] (c) at (60:2) {};
    \node[shape=circle, fill] (d) at (-60:2) {};
    \draw (a) -- (b) -- (c) -- (a) -- (d) --(b);
    \draw[line width=4pt,lightgray] (-60:1) arc (120:60:1) arc (-120:120:1) arc (300:240:1);
    \draw[line width=4pt] (-60:1) arc (-60:60:1);
    \fill[white] (-60:1) circle (1.5pt);
    \fill[white] (-30:{sqrt(3)}) circle (1.5pt);
    \fill[white] ({-asin(1/2*sqrt(3/7))}:{sqrt(7)}) circle (1.5pt);
    \fill[white] (0:3) circle (1.5pt);
    \fill[white] ({asin(1/2*sqrt(3/7))}:{sqrt(7)}) circle (1.5pt);
    \fill[white] (30:{sqrt(3)}) circle (1.5pt);
    \fill[white] (60:1) circle (1.5pt);
    \fill[white] (0:1) circle (1.5pt);
  \end{tikzpicture}

  \begin{tikzpicture}[scale=0.75]
    \node[shape=circle,draw] (a) at (180:2) {};
    \node[shape=circle,fill=lightgray,draw] (b) at (0:0) {};
    \node[shape=circle,draw] (c) at (0:2) {};
    \node[shape=circle,fill] (d) at (300:2) {};
    \node[shape=circle,fill] (e) at (240:2) {};
    \draw (a) -- (b) -- (c) -- (d) -- (b) -- (e) -- (a);
    \draw[line width=4pt,lightgray] (210:{sqrt(3)}) arc (120:60:1) arc (240:300:1) arc (120:60:1);
    \draw[line width=4pt] (210:{sqrt(3)}) arc (-60:0:1) arc (180:0:1) arc (180:240:1);
    \fill[white] (210:{sqrt(3)}) circle (1.5pt);
    \fill[white] (180:1) circle (1.5pt);
    \fill[white] (120:1) circle (1.5pt);
    \fill[white] (60:1) circle (1.5pt);
    \fill[white] (0:1) circle (1.5pt);
    \fill[white] (-30:{sqrt(3)}) circle (1.5pt);
    \fill[white] (240:1) circle (1.5pt);
    \fill[white] (300:1) circle (1.5pt);
  \end{tikzpicture}
  \begin{tikzpicture}[scale=0.75]
    \node[shape=circle,fill] (a) at (180:2) {};
    \node[shape=circle,fill=lightgray,draw] (b) at (0:0) {};
    \node[shape=circle,fill] (c) at (0:2) {};
    \node[shape=circle,draw] (d) at (300:2) {};
    \node[shape=circle,draw] (e) at (240:2) {};
    \draw (a) -- (b) -- (c) -- (d) -- (b) -- (e) -- (a);
    \draw[line width=4pt,lightgray] (210:{sqrt(3)}) arc (-60:0:1) arc (180:0:1) arc (180:240:1);
    \draw[line width=4pt] (210:{sqrt(3)}) arc (120:60:1) arc (240:300:1) arc (120:60:1);    \fill[white] (210:{sqrt(3)}) circle (1.5pt);
    \fill[white] (180:1) circle (1.5pt);
    \fill[white] (120:1) circle (1.5pt);
    \fill[white] (60:1) circle (1.5pt);
    \fill[white] (0:1) circle (1.5pt);
    \fill[white] (-30:{sqrt(3)}) circle (1.5pt);
    \fill[white] (240:1) circle (1.5pt);
    \fill[white] (300:1) circle (1.5pt);
  \end{tikzpicture}
  \begin{tikzpicture}[scale=0.75]
    \node[shape=circle,fill] (a) at (120:2) {};
    \node[shape=circle,draw] (b) at (60:2) {};
    \node[shape=circle,fill=lightgray,draw] (c) at (0:0) {};
    \node[shape=circle,draw] (d) at (300:2) {};
    \node[shape=circle,fill] (e) at (240:2) {};
    \draw (c) -- (a) -- (b) -- (c) -- (d) -- (e) -- (c);
    \draw[line width=4pt,lightgray] (90:{sqrt(3)}) arc (0:-60:1) arc (120:240:1) arc (60:0:1);
    \draw[line width=4pt] (90:{sqrt(3)}) arc (180:240:1) arc (60:-60:1) arc (120:180:1);
    \fill[white] (90:{sqrt(3)}) circle (1.5pt);
    \fill[white] (60:1) circle (1.5pt);
    \fill[white] (0:1) circle (1.5pt);
    \fill[white] (-60:1) circle (1.5pt);
    \fill[white] (270:{sqrt(3)}) circle (1.5pt);
    \fill[white] (120:1) circle (1.5pt);
    \fill[white] (180:1) circle (1.5pt);
    \fill[white] (240:1) circle (1.5pt);
  \end{tikzpicture}
  \caption{4-bulb shear insertion (top left), 4-bulb shear reduction (top right), 3-bulb shear insertion (bottom left), 3-bulb shear reduction (bottom center), reflection (bottom right)}
  \label{triangular tangle operations}
\end{figure}
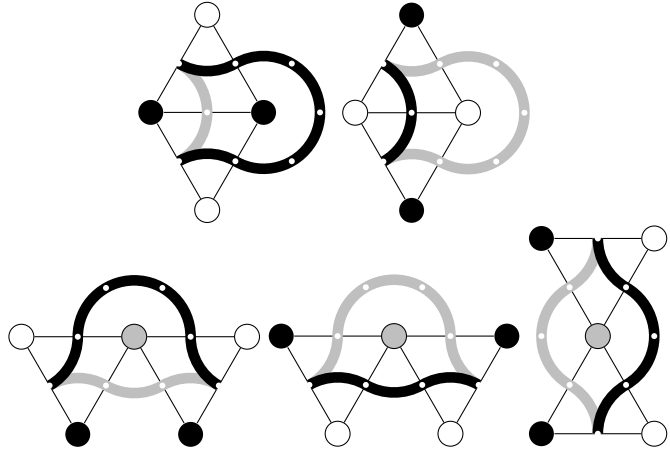

\begin{definition}
  A triangular Tangle is \textit{constructible} if it may be obtained from a circle by a sequence of 4-bulb shear insertions, 4-bulb shear reductions, 3-bulb shear insertions, 3-bulb shear reductions, and/or reflections.
\end{definition}

\begin{theorem}
  Every triangular Tangle is constructible.
\end{theorem}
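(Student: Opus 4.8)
The plan is to follow the template of \cref{square tangle constructible}, again by induction on the size $m$, but now peeling off an entire pseudo-diamond --- two triangles at a time --- rather than a single polygon, since adding a lone triangle to a dual pseudo-polyiamond would spoil the two-coloring of its boundary. The base case is $m=0$ (the circle). One also checks that no triangular Tangle has size $1$: a single triangle in the dual pseudo-polyiamond would force the Tangle to use adjacent links from all three of the corresponding circles, hence would require a proper $2$-coloring of a triangle, which is impossible. So for $m\ge 2$ it suffices to exhibit a pseudo-diamond $D$ in the dual pseudo-polyiamond $P$ whose removal yields the dual pseudo-polyiamond of a triangular Tangle $T'$ of size $m-2$, in such a way that $T$ is recovered from $T'$ by one of the five fundamental operations of \cref{triangular tangle operations}; the inductive hypothesis (applied to the strictly smaller size $m-2$) then makes $T'$, and hence $T$, constructible. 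As a byproduct this also shows that $m$ is always even.

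To produce $D$, I would first fix a decomposition of $P$ into pseudo-diamonds: since any two triangles of $P$ that meet at a vertex form a diamond, a bowtie, or vampire teeth, this is the same as choosing a perfect matching of the triangles of $P$ in which matched triangles share at least one vertex, and one must argue such a matching always exists. Then, exactly as in the square case, form the graph $\Gamma$ whose vertices are the pseudo-diamonds of the decomposition, joining two of them by an edge whenever they share a black boundary vertex or an interior vertex; by \cref{no white cut vertices}, $\Gamma$ is connected, so it has a spanning tree, and a leaf $D$ of that spanning tree can be deleted from $\Gamma$ --- hence from $P$ --- without disconnecting what remains. It then has to be verified that $P\setminus D$ is still simply connected and that the two triangles of $D$ are glued to $P\setminus D$ through exactly one of the five vertex-incidence patterns listed before \cref{triangular tangle operations} (possibly after recoloring a single interior vertex of $P\setminus D$, as indicated in gray in that figure), so that the corresponding fundamental operation carries $T'$ to $T$.

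The step I expect to be the real obstacle is precisely this last verification. In the square and hexagonal cases the boundary coloring is forced and inherited automatically, whereas here removing two triangles can change a circle from exterior to interior, so one must show that $P\setminus D$ admits a legitimate boundary coloring --- no white cut vertices, and colors alternating along the boundary except where permitted --- and that the way the leaf pseudo-diamond $D$ attaches is never one of the configurations not on the list of five. This may require choosing $D$ more carefully than ``an arbitrary spanning-tree leaf'' (for instance, a leaf lying in an extremal part of $P$, or supplemented by a local analysis at a black cut vertex), and it is where the non-$2$-colorability of the triangular tiling genuinely complicates the argument; establishing the existence of the pseudo-diamond decomposition is a secondary technical point. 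Everything else is the same bookkeeping already carried out for square and hexagonal Tangles.
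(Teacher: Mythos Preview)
Your plan mirrors the square/hexagonal template by working inside the dual pseudo-polyiamond, but the paper takes a genuinely different route: it never decomposes the polyform at all. Instead it runs a minimal-counterexample argument directly on the Tangle curve and closes with the Gauss--Bonnet theorem (\cref{gauss-bonnet}). A minimal non-constructible triangular Tangle can contain no $4$-bulb and no $3$-bulb (otherwise undo the corresponding shear insertion to get a smaller Tangle), and any $2$-bulb it contains must be of the ``unreflectable'' type shown in \cref{2-bulb no reflection}, where the kissing circle already contributes at least two concave links. A Tangle built only from such $2$-bulbs and $1$-bulbs has at least as many concave links as convex links, contradicting $j-k=6$.

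The difficulties you flag in your own plan are real and, I think, more than ``secondary technical points''. First, the existence of a perfect matching of the triangles of $P$ into vertex-sharing pairs is not automatic; you do not even know $m$ is even without something close to the theorem you are proving. Second, and more seriously, the five fundamental operations do \emph{not} exhaust the ways a pseudo-diamond can sit against the rest of $P$: a bowtie whose shared vertex happens to be black, or a diamond whose two shared black vertices lie on its common edge, matches none of the five incidence patterns, so an arbitrary spanning-tree leaf need not be removable by the inverse of any listed operation. Fixing this would require either a much more delicate choice of leaf or an enlarged move-set, and neither is straightforward. The paper's Gauss--Bonnet argument sidesteps all of this polyform bookkeeping entirely; that is what it buys.
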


\begin{proof}
  Suppose that there exists a non-constructible triangular Tangle of minimal size.  If it has a 4-bulb, then it may be obtained via 4-bulb shear insertion from a smaller Tangle, a contradiction.  Similarly, it cannot have a 3-bulb, as otherwise it may be obtained via 3-bulb shear insertion from a smaller Tangle.

  Suppose the Tangle contains a 2-bulb.  If this 2-bulb may be obtained via reflection from a smaller Tangle, then we again have a contradiction.

  However, it is possible for a triangular Tangle to contain a 2-bulb that cannot be obtained in this way if it contains at least two corresponding concave links from a kissing circle as seen in \cref{2-bulb no reflection}.  Suppose that the Tangle consists of only 2-bulbs of this type and/or 1-bulbs.  Then it must have at least as many concave links as convex links, contradicting \cref{gauss-bonnet}.
\end{proof}

\begin{figure}
  \begin{tikzpicture}[scale=0.75]
    \node[shape=circle,fill] (a) at (0, 0) {};
    \node[shape=circle,draw] (b) at (60:2) {};
    \node[shape=circle,draw] (d) at (180:2) {};
    \node[shape=circle,draw] (f) at (300:2) {};
    \draw (b) -- (120:2) -- (d) -- (240:2) -- (f);
    \draw (a) -- (b);
    \draw (a) -- (120:2);
    \draw (a) -- (d);
    \draw (a) -- (240:2);
    \draw (a) -- (f);
    \draw[line width=4pt] (150:{sqrt(3)}) arc (60:-60:1);
    \draw[line width=4pt] (270:{sqrt(3)}) arc (180:120:1) arc (-60:60:1) arc (240:180:1);
    \fill[white] (0, {-sqrt(3)}) circle (1.5pt);
    \fill[white] (-60:1) circle (1.5pt);
    \fill[white] (1, 0) circle (1.5pt);
    \fill[white] (60:1) circle (1.5pt);
    \fill[white] (0, {sqrt(3)}) circle (1.5pt);
    \fill[white] (150:{sqrt(3)}) circle (1.5pt);
    \fill[white] (-1, 0) circle (1.5pt);
    \fill[white] (210:{sqrt(3)}) circle (1.5pt);
  \end{tikzpicture}

  \caption{2-bulb that cannot be obtained via reflection}
 \label{2-bulb no reflection}
\end{figure}
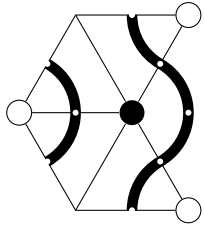

\begin{corollary}\label{triangular tangle length}
  The length of a triangular Tangle is even.
\end{corollary}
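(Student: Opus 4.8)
The plan is to obtain this as an immediate consequence of the Gauss–Bonnet theorem for Tangles (\cref{gauss-bonnet}). Let $T$ be a triangular Tangle with $j$ convex links and $k$ concave links, so that its length is $\ell = j + k$. The links of a triangular Tangle are sixths of circles, so applying \cref{gauss-bonnet} with $n = 6$ yields $j - k = 6$. Therefore $\ell = j + k = (j - k) + 2k = 6 + 2k$, which is even. This argument is self-contained and does not even require the constructibility theorem, so I would present it as the proof.

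For completeness — and to match the style of the proofs of \cref{square tangle length} and \cref{hexagonal tangle length} — I would also indicate the alternative argument via constructibility. By the preceding theorem, every triangular Tangle arises from a circle by a sequence of the five fundamental operations. The initial circle has length $6$; a $4$-bulb shear insertion or reduction changes the length by $\pm 4$ (a $4$-bulb consists of six links and displaces two); a $3$-bulb shear insertion or reduction changes it by $\pm 2$ (a $3$-bulb consists of five links and displaces three); and a reflection merely exchanges an inverted $2$-bulb for a $2$-bulb, each made up of four links, hence leaves the length unchanged. Since $6$ is even and every one of these increments is even, an induction on the number of fundamental operations shows the length stays even.

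The only genuine work in the second approach is verifying these net changes in length, which amounts to counting, for each of the five moves depicted in \cref{triangular tangle operations}, how many convex and concave links are created versus destroyed; this is routine but must be checked in all five cases, and it is the step where a miscount would be most likely. The Gauss–Bonnet route avoids this bookkeeping entirely, so I expect no real obstacle there beyond correctly identifying $n = 6$ for triangular Tangles.
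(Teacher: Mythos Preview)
Your proposal is correct. Your primary argument via \cref{gauss-bonnet} is valid and is genuinely different from the paper's proof: the paper uses only the constructibility route, exactly as in your second paragraph, recording that the initial circle has length $6$ and that the five fundamental operations change the length by $+4$, $+2$, $0$, $-2$, $-4$, all even. Your Gauss--Bonnet argument is cleaner and self-contained---it bypasses the constructibility theorem entirely and would in fact give the stronger statement that the length is $\geq 6$ (since $k\geq 0$). The constructibility approach, on the other hand, keeps the proof parallel to those of \cref{square tangle length} and \cref{hexagonal tangle length} and exercises the machinery just developed; that is presumably why the paper chose it. Since you already present both, there is nothing to fix.
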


\begin{proof}
  The initial circle has length 6, each 4-bulb shear insertion increases the length by 4, each 3-bulb shear insertion increases the length by 2, each reflection preserves the length, each 3-bulb shear reduction decreases the length by 2, and each 4-bulb shear reduction decreases the length by 4.
\end{proof}

\begin{corollary}\label{triangular tangle area}
  The area enclosed by a triangular Tangle of size $m$ and radius $r$ is $(\sqrt 3m + \pi)r^2$.
\end{corollary}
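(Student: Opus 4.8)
\emph{Proof proposal.} The plan is to argue exactly as in the proofs of \cref{square tangle area} and \cref{hexagonal tangle area}, using the theorem (just established) that every triangular Tangle is obtained from a circle by a finite sequence of $4$-bulb shear insertions and reductions, $3$-bulb shear insertions and reductions, and reflections. A circle encloses area $\pi r^2$ and has dual pseudo-polyiamond of size $m=0$, so the formula holds in the base case. Each of the five fundamental operations adjoins a pseudo-diamond---two triangles of the underlying tiling $\mathscr T$---to the dual pseudo-polyiamond, and hence increases the size $m$ by exactly $2$. A triangle of $\mathscr T$ is equilateral with side length $2r$ (the distance between the centers of two kissing circles), so it has area $\frac{\sqrt 3}{4}(2r)^2 = \sqrt 3 r^2$, and two of them together have area $2\sqrt 3 r^2$. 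Thus it suffices to show that each of the five operations increases the enclosed area by $2\sqrt 3 r^2$; the corollary then follows by induction on $m$, since $\sqrt 3(m+2)r^2 + \pi r^2 = \sqrt 3 m r^2 + 2\sqrt 3 r^2 + \pi r^2$.

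To verify the claim operation by operation, I would treat each as a local modification of the Tangle and identify the region by which the enclosed area changes, then exhibit an explicit dissection of that region into circular pieces that reassemble into the two equilateral triangles of side $2r$ adjoined to the dual pseudo-polyiamond, in the spirit of \cref{3-bulb area} and \cref{two 2-bulb area}. For the four shear operations this is routine: a shear insertion adjoins the region bounded by the newly created $n$-bulb (with $n=3$ or $n=4$), while a shear reduction fills in the region bounded by the inverted $n$-bulb being removed, and in each case the dissection yields area $2\sqrt 3 r^2$. I would collect all of these dissections into a figure analogous to \cref{3-bulb area}.

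The reflection is the case I expect to be the main obstacle, since the enclosed region does not simply grow by the inside of a convex bulb: an inverted $2$-bulb bulging into the interior is replaced by a $2$-bulb bulging outward, so the area gained is the region swept between the old and new arcs---equivalently, the region vacated by the old inverted bulb together with the region newly covered by the $2$-bulb. The plan is to show by an explicit dissection that this swept region is equidecomposable with the two triangles of the bowtie, so the area change is again $2\sqrt 3 r^2$. I would also remark that several of these operations recolor one existing boundary vertex (a circle passing from the exterior to the interior, shown in gray in \cref{triangular tangle operations}); this is purely a combinatorial change to the dual polyform and affects neither the enclosed area nor the number of triangles, so it does not disturb the induction.
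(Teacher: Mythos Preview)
Your proposal is correct and follows essentially the same route as the paper: induction via the constructibility theorem, with the base case a circle of area $\pi r^2$, and the inductive step showing that each of the five fundamental operations adds a pseudo-diamond (so $m\mapsto m+2$) while increasing the enclosed area by exactly $2\sqrt 3 r^2$, verified by explicit dissections of the $4$-bulb, $3$-bulb, and $2$-bulb regions into two equilateral triangles of side $2r$. Your more careful separation of the insertion, reduction, and reflection cases---especially your observation that the reflection swaps an inverted $2$-bulb for a $2$-bulb rather than simply appending a convex region---is a helpful elaboration, but the paper collapses these into the single statement that in every case the added region is that enclosed by an $n$-bulb, with the dissections relegated to a figure.
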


\begin{proof}
  The area of the initial circle is $\pi r^2$.  For each fundamental operation, we add the area enclosed by a 4-bulb, 3-bulb, or 2-bulb and also increase the size by 2.  As seen in \cref{triangular tangle area cases}, each of these is the same as the area enclosed by two equilateral triangles with side length $2r$, and thus area $\sqrt 3 r^2$.
\end{proof}

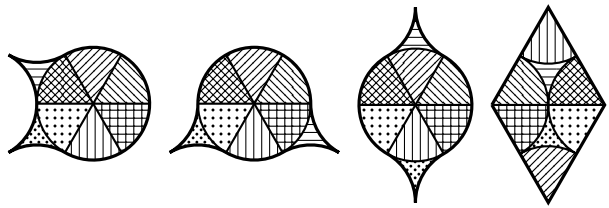
\begin{figure}
  \begin{tikzpicture}[scale=0.75]
    \fill[pattern=crosshatch] (0:0) -- (180:1) arc (180:120:1) -- (0:0);
    \fill[pattern=north east lines] (0:0) -- (120:1) arc (120:60:1) -- (0:0);
    \fill[pattern=north west lines] (0:0) -- (60:1) arc (60:0:1) -- (0:0);
    \fill[pattern=grid] (0:0) -- (300:1) arc (300:360:1) -- (0:0);
    \fill[pattern=vertical lines] (0:0) -- (240:1) arc (240:300:1) -- (0:0);
    \fill[pattern=dots] (0:0) -- (180:1) arc (180:240:1) -- (0:0);
    \fill[pattern=horizontal lines] (120:1) arc (300:240:1) arc (60:0:1) arc (180:120:1);
    \fill[pattern=crosshatch dots] (180:1) arc (0:-60:1) arc (120:60:1) arc (240:180:1);
    \draw[thick] (0:0) -- (0:1);
    \draw[thick] (0:0) -- (60:1);
    \draw[thick] (0:0) -- (120:1);
    \draw[thick] (0:0) -- (180:1);
    \draw[thick] (0:0) -- (240:1);
    \draw[thick] (0:0) -- (300:1);
    \draw[thick] (120:1) arc (120:240:1);
    \draw[very thick] (150:{sqrt(3)}) arc (60:-60:1) arc (120:60:1) arc (-120:120:1) arc (300:240:1);
    \fill[white] (270:{sqrt(3)}) circle (1.5pt); 
  \end{tikzpicture}
  \begin{tikzpicture}[scale=0.75]
    \fill[pattern=crosshatch] (0:0) -- (180:1) arc (180:120:1) -- (0:0);
    \fill[pattern=north east lines] (0:0) -- (120:1) arc (120:60:1) -- (0:0);
    \fill[pattern=north west lines] (0:0) -- (60:1) arc (60:0:1) -- (0:0);
    \fill[pattern=grid] (0:0) -- (300:1) arc (300:360:1) -- (0:0);
    \fill[pattern=vertical lines] (0:0) -- (240:1) arc (240:300:1) -- (0:0);
    \fill[pattern=dots] (0:0) -- (180:1) arc (180:240:1) -- (0:0);
    \fill[pattern=horizontal lines] (0:1) arc (180:240:1) arc (60:120:1) arc (-60:0:1);
    \fill[pattern=crosshatch dots] (180:1) arc (0:-60:1) arc (120:60:1) arc (240:180:1);
    \draw[thick] (0:0) -- (0:1);
    \draw[thick] (0:0) -- (60:1);
    \draw[thick] (0:0) -- (120:1);
    \draw[thick] (0:0) -- (180:1);
    \draw[thick] (0:0) -- (240:1);
    \draw[thick] (0:0) -- (300:1);
    \draw[thick] (0:1) arc (0:-60:1);
    \draw[thick] (180:1) arc (180:240:1);
    \draw[very thick] (180:1) arc (180:0:1) arc (180:240:1) arc (60:120:1) arc (300:240:1) arc (60:120:1) arc (-60:0:1);
    \fill[white] (270:{sqrt(3)}) circle (1.5pt); 
  \end{tikzpicture}
    \begin{tikzpicture}[scale=0.75]
    \fill[pattern=crosshatch] (0:0) -- (180:1) arc (180:120:1) -- (0:0);
    \fill[pattern=north east lines] (0:0) -- (120:1) arc (120:60:1) -- (0:0);
    \fill[pattern=north west lines] (0:0) -- (60:1) arc (60:0:1) -- (0:0);
    \fill[pattern=grid] (0:0) -- (300:1) arc (300:360:1) -- (0:0);
    \fill[pattern=vertical lines] (0:0) -- (240:1) arc (240:300:1) -- (0:0);
    \fill[pattern=dots] (0:0) -- (180:1) arc (180:240:1) -- (0:0);
    \fill[pattern=horizontal lines] (120:1) arc (-60:0:1) arc (180:240:1) arc (60:120:1);
    \fill[pattern=crosshatch dots] (240:1) arc (240:300:1) arc (120:180:1) arc (0:60:1);
    \draw[thick] (0:0) -- (0:1);
    \draw[thick] (0:0) -- (60:1);
    \draw[thick] (0:0) -- (120:1);
    \draw[thick] (0:0) -- (180:1);
    \draw[thick] (0:0) -- (240:1);
    \draw[thick] (0:0) -- (300:1);
    \draw[thick] (120:1) arc (120:60:1);
    \draw[thick] (240:1) arc (240:300:1);
    \draw[very thick] (60:1) arc (60:-60:1) arc (120:180:1) arc (0:60:1) arc (240:120:1) arc (-60:0:1) arc (180:240:1);
  \end{tikzpicture}
  \begin{tikzpicture}[scale=0.75]
    \fill[pattern=crosshatch] (1, 0) arc (180:120:1) -- (2, 0);
    \fill[pattern=north east lines] (-60:1) arc (120:60:1) -- (-60:2);
    \fill[pattern=north west lines] (1, 0) arc (0:60:1) -- (0, 0);
    \fill[pattern=grid] (1, 0) arc (360:300:1) -- (0, 0);
    \fill[pattern=vertical lines] (60:1) arc (240:300:1) -- (60:2);
    \fill[pattern=dots] (1, 0) arc (180:240:1) -- (2,0);
    \fill[pattern=horizontal lines] (1, 0) arc (180:120:1) arc (300:240:1) arc (60:0:1);
    \fill[pattern=crosshatch dots] (1, 0) arc (180:240:1) arc (60:120:1) arc (-60:0:1);
    \draw[very thick] (0:0) -- (60:2) -- (0:2) -- (-60:2) -- (0:0);
    \draw[thick] (0, 0) -- (2, 0);
    \draw[thick] (1, 0) arc (180:120:1) arc (300:240:1) arc (60:0:1);
    \draw[thick] (1, 0) arc (180:240:1) arc (60:120:1) arc (-60:0:1);
  \end{tikzpicture}
  \label{triangular tangle area cases}
  \caption{Area enclosed by a 4-bulb, 3-bulb, and 2-bulb}
\end{figure}

\bibliography{regular-tangles}{}
\bibliographystyle{plain}

\end{document}